\theoremstyle{plain}
\newtheorem{theo}{Theorem}[section]
\newtheorem*{theo*}{Theorem}
\newtheorem{prop}{Proposition}[section]
\newtheorem*{lem*}{Lemma}
\theoremstyle{definition}
\newtheorem{defi}{Definition}[section]
\theoremstyle{remark}
\newtheorem*{rem*}{Remark}
\newcommand{\R}{\mathbb{R}}
\newcommand{\C}{\mathbb{C}}
\begin{document}

\title{Projective Dynamics and an Integrable Boltzmann Billiard Model}

\author{Lei Zhao}

\abstract 
The aim of this note is to explain the integrability of an integrable Boltzmann billiard model, previously established by Gallavotti and Jauslin \cite{Gallavotti-Jauslin}, alternatively via the viewpoint of projective dynamics. The additional first integral is shown to be related to the energy of a corresponding system on a hemisphere. We show that this viewpoint applies to certain other billiard models in the plane defined through Kepler-Coulomb problems as well. The approach also leads to a family of integrable billiard models on the sphere defined through the spherical Kepler-Coulomb problem.

\endabstract

\date\today
\maketitle

\section{Introduction}
In \cite{Gallavotti-Jauslin}, Gallavotti and Jauslin examined a billiard model derived from the Kepler problem in the plane, with a line not containing the attractive center as wall of reflection. The billiard model is defined through the Kepler dynamics with the usual law of reflection in the plane. In the case of the Kepler problem, since each Keplerian orbit with negative energy intersect the line of reflection generically at two points, the billiard dynamics on one side of the wall is the reverse of the billiard dynamics on the other side. Still it is preferred to take the dynamics on the other side of the line as the center to avoid problem of collisions. Gallavotti and Jauslin showed that the billiard system is integrable in the sense that it has another first integral additional to the energy, which confirms a previous conjecture of Gallavotti \cite{Gallavotti 2014}, \cite[Appendix D]{Gallavotti} on its integrability.  This integrable Boltzmann model is shown to carry periodic and quasi-periodic dynamics by Felder \cite{Felder} with algebraic geometric method related to the Poncelet theorem.

This model is a limiting case of a toy model considered by Boltzmann \cite{Boltzmann}, defined such that an additional centrifugal force with strength inverse proportional to the cubic of the distance to the center is added. Boltzmann assumed that this is an ergodic system which illustrates his ``ergodic hypothesis''.  Actually this may only happen when the additional centrifugal force is sufficiently large, as by the analysis of Gallavotti-Jauslin and Felder, KAM and topological stability of orbits on energy hypersurfaces can be established via application of KAM theory.  

In this note we aim to explain the integrability of the integrable Boltzmann Model alternatively with the viewpoint of projective dynamics, as developed and illustrated in \cite{Halphen}, \cite{Appell}, \cite{Albouy 2008}, \cite{Albouy 2013}. We shall show that the first integrals of Gallavotti-Jauslin is equivalent to the energies of the planar problem and a corresponding spherical problem. Moreover, we explain that this viewpoint leads to certain integrable variants of this integrable Boltzmann model in the plane and on the sphere.

We note that the ``projective method'' has been previously used to obtain the integrability of the geodesic flow on an ellipsoid \cite{Tabachnikov 1999}, \cite{Topalov-Matveev} and the billiard system inside an ellipsoid \cite{Tabachnikov 2002}. Our result can be considered as extension of this method to billiard systems defined through mechanical systems. 

We organize this note as follows: We first recall projective dynamical properties of the Kepler-Coulomb problem in Section \ref{Sec: 2}. In Sections \ref{Sec: Int Models plan} and \ref{Sec: 4}, we define certain billiard systems in the plane (which include the integrable Boltzmann model) and on the sphere with Kepler-Coulomb problem and prove their integrability respectively. Some further remarks are collected in Section \ref{Sec: 5}.


\section{Projective Dynamics of Kepler-Coulomb Problems}\label{Sec: 2}
We start with a mechanical system $(M, g, U)$ on a Riemannian manifold $(M, g)$ with the force function (negative of the potential) $U$. Newton's equations of motion of this system are given by 
$$\nabla_{\dot{q}} \dot{q}=\hbox{grad } U,$$
in which $\nabla$ denotes the Levi-Civita connection associated to $g$, and $\hbox{grad}$ denotes the gradient. 

Following Painlev\'e \cite{P}, we define

\begin{defi} Two mechanical systems $(M_{1}, g_{1}, U_{1}), (M_{2}, g_{2}, U_{2})$ are called \emph{in correspondence}, if there is a diffeomorphism $\phi: M_{1} \to M_{2}$, such that for any unparametrized orbit $\gamma \subset M_{1}$ of  $(M_{1}, g_{1}, U_{1})$, the orbit $\phi(\gamma) \subset M_{2}$ is an unparametrized orbit of $(M_{2}, g_{2}, U_{2})$ and vice versa.
\end{defi}

In other words, two mechanical systems are in correspondence if their orbits in configuration spaces agree up to diffeomorphism and up to time-reparametrization. With this definition, a mechanical system $(M, g, U)$ naturally have corresponding systems $(M, k g, k' U), k,k' \in \R_{+}$, but these do not provide further informations of the system, thus this type of correspondence is considered as trivial. The correspondence is called non-trivial if the energies of the two mechanical systems are functionally independent. When a mechanical system has a non-trivial corresponding system, then the system itself can be realized as a quasi-bi-Hamiltonian system, \emph{i.e.} it admits two different Hamiltonian formalisms up to time reparametrization and in this case both Hamiltonians, \emph{i.e.} both energies, are first integrals of the system \cite{BCRR} . Note that non-trivial corresponding system to a mechanical system, when exist, need not to be unique up to trivial correspondences.

Now let $\phi: M_{1} \to M_{2}$ be a diffeomorphism and suppose that we have a mechanical system $(M_{1}, g_{1}, U_{1})$.  With these we get a vector field (force field) $\phi_{*} \hbox{grad } U_{1}$ on $M_{2}$. To get a corresponding system of $(M_{1}, g_{1}, U_{1})$ it is enough to have a function $\rho: M_{2} \to \R_{+}$ such that the reparametrized force field $\rho \cdot \phi_{*} \hbox{grad } U_{1}$ is the gradient of a function $U_{2}: M_{2} \to \R$ with respect to a metric $g_{2}$. 

Now suppose $(M_{1}, g_{1}, V_{1})$ and $(M_{2}, g_{2}, V_{2})$ are in correspondence. Any initial condition $(x,v)\in T M_1$ determines an orbit $a(t)$ of $(M_{1}, g_{1}, U_{1})$ such that
$(a(0),\dot{a}(0))=(x,v)$. Then $\{b(t):=\phi \circ a(t)\}$ is an unparametrized orbit of the second system. This means that there is (at least locally) a diffeomorphism $\tau \mapsto t(\tau)$ such that $b(t(\tau))$ is an
orbit of $(M_{2}, g_{2}, U_{2})$. Due to the time reparametrization, to get the corresponding velocity in $M_{2}$, one needs to further multiply the push-forward $\phi_{*} v$ of the velocity by the factor 
$f:=d t/d \tau=\frac{d b(t(\tau))}{d \tau}/\frac{d b(t)}{d t}|_{t=0}$ which can be considered as a real function on $T M_{1}$.
The kinetic energy of the system $(M_{2}, g_{2}, U_{2})$ is thus expressed as a function on $T M_1$ as
$\dfrac{1}{2} g_2 (\phi(x)) (f\phi_* v,f\phi_*v)$, and the force
function as a function on $M_1$ as $U_2(\phi(x))$. In
combination this expresses the energy of the system $(M_{2}, g_{2}, U_{2})$ as a function on $T
M_1$.  

A planar central force problem is a mechanical system $(\R^{2} \setminus Z, g_{flat}, U)$ such that the force function $U$ is invariant under the $SO(2)$ action by rotations in $\R^{2}$ fixing the center $Z \in \R^{2}$. The mass factor of such a system refers to that of the center, which can nevertheless take both signs to allow both attractive and repulsive forces. The mass of the moving particle is taken as the unit of mass. A central force problem on a sphere is defined analogously.

We now consider two types of diffeomorphisms given by central projections and see how they transform central force problems, following \cite{Albouy 2008} and \cite{Albouy}. The center of projection is always $O=(0,0,0) \in \R^{3}$. 

The first type of projection projects affine planes to affine planes in $\R^{3}$ (See Figure \ref{Figure: ProjPlan}). Let $V_{1}, V_{2} \subset \R^{3}$ be two hyperplanes in $\R^{3}$ given respectively by the equations $\langle h_{1}, q\rangle=1$ and $\langle h_{2}, q\rangle=1$, in which $h_{1}, h_{2} \in {\R^{3}}^{*} \setminus O \cong \R^{3} \setminus O$. For a central force system on $V_{1}$, its center $Z_{1}$ is projected to a point $Z_{2}  \in V_{2}$. A point $q_{2} \in V_{2}$ is projected from a point $q_{1} \in V_{1}$ such that $q_{2}:= \lambda(q_{1})^{-1} q_{1}$. With the equations we determine the function as $\lambda(q_{1})=\langle h_{2} , q_{1}\rangle$. We consider the projection in the region where $\lambda(q_{1})=\langle h_{2} , q_{1}\rangle =\langle h_{1} , q_{2} \rangle^{-1}>0$, which projects the half plane $\{q_{1} \in V_{1}, \langle h_{2} , q_{1}\rangle>0\}$ to the half plane $\{q_{2} \in V_{2}, \langle h_{1} , q_{2}\rangle>0\}$.

\begin{figure}\label{Figure: ProjPlan}
\center
\includegraphics[width=80mm]{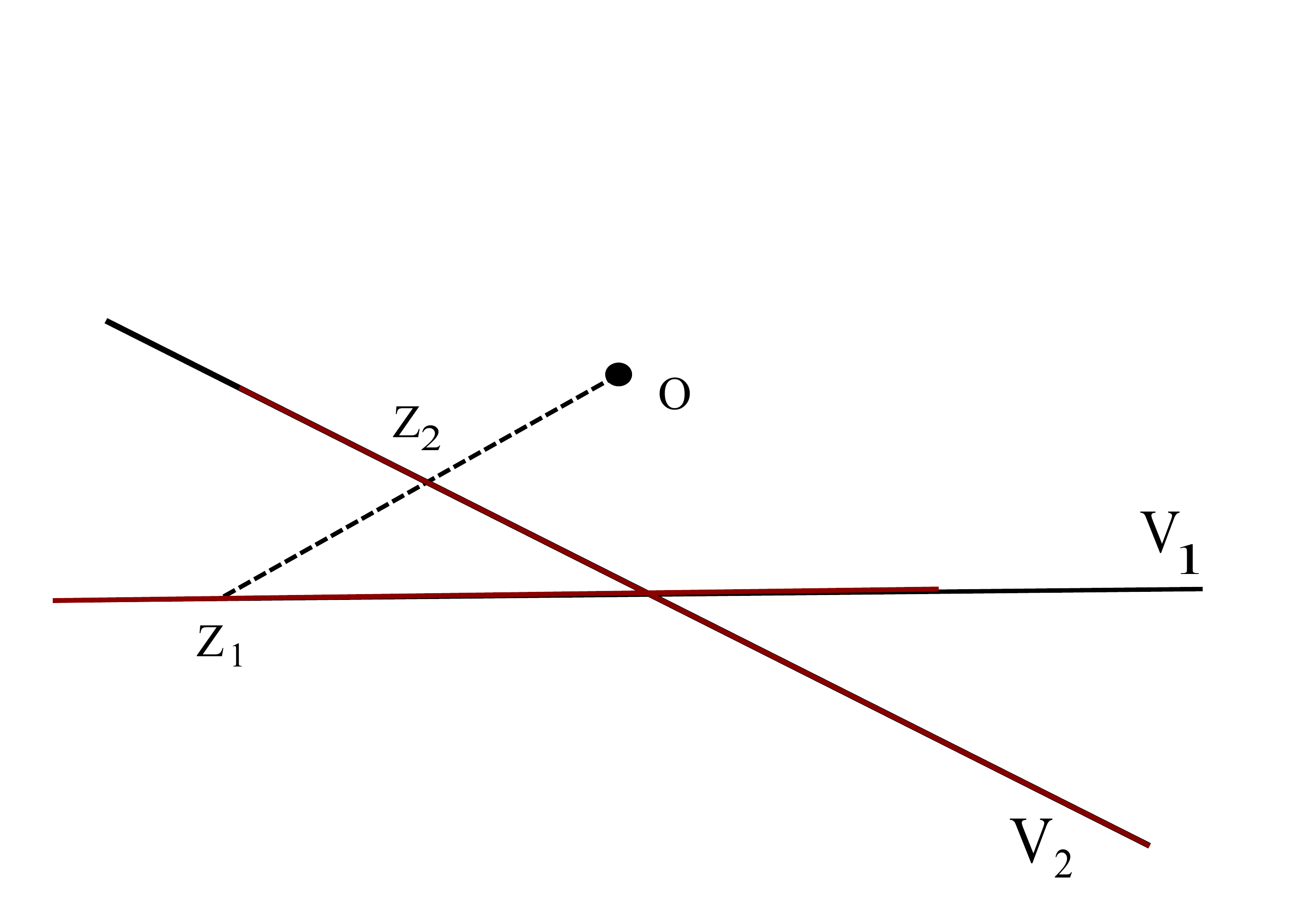}
\caption{A Plane-Plane Projection}
\end{figure} 

We compute
$$\dot{q}_{2}=\lambda(q_{1})^{-2} (\dot{q}_{1} \lambda(q_{1})-\langle \hbox{grad } \lambda(q_{1}), \dot{q}_{1} \rangle q_{1})=\lambda(q_{1})^{-2} (\dot{q}_{1} \lambda(q_{1})-\langle h_{2}, \dot{q}_{1} \rangle q_{1}).$$
We now change time in $V_{2}$ according to the law $\dfrac{d}{d \tau}=\lambda(q_{1})^{2} \dfrac{d}{d t}$. We denote the time derivative with respect to $\tau$ by $'$. We may then write the above expression as
$$q'_{2}=\dot{q}_{1} \lambda(q_{1})-\langle h_{2}, \dot{q}_{1} \rangle q_{1},$$
and 
\begin{equation}\label{eq: q2''}
q^{''}_{2}=\lambda(q_{1})^{2} (\lambda(q_{1}) \ddot{q}_{1}-\langle h_{2}, \ddot{q}_{1} \rangle q_{1}).
\end{equation}

We also have $q_{1}=\langle h_{1},  q_{2}\rangle^{-1} q_{2}$. The right hand side can thus be written in a form depending only on $q_{2}$ and therefore defines a force field on $V_{2}$. 

We now assume that $U_{1}=m_{1} \|q_{1}-Z_{1}\|_{1}^{\alpha}$, in which the distance $\|\cdot\|_{1}$ is an Euclidean distance on $V_{1}$. It can be taken as the restriction of the Euclidean distance of $\R^{3}$, but it has to be kept in mind that this is purely auxiliary and we shall not always equip $\R^{3}$ with its Euclidean form.

We thus have 
$$\ddot{q}_{1}=\hbox{grad } U_{1}=\alpha m_{1} \|q_{1}-Z_{1}\|_{1}^{\alpha-2} (q_{1}-Z_{1}).$$
Thus Eq. \eqref{eq: q2''} takes the form
\begin{equation}\label{eq: q2''central}
\begin{aligned}
q^{''}_{2}=&\lambda(q_{1})^{2} (\lambda(q_{1}) \ddot{q}_{1}-\langle h_{2}, \ddot{q}_{1} \rangle q_{1}) \\
            = &\alpha m \langle h_{1}, q_{2} \rangle^{-3}  \langle h_{1}, Z_{2} \rangle^{-1} \|  q_{2} \langle h_{1}, q_{2} \rangle^{-1}-Z_{2} \langle h_{1}, Z_{2} \rangle^{-1}\|_{1}^{\alpha-2} (q_{2}-Z_{2}). 
\end{aligned}
\end{equation}

To proceed we would like to sort out the factor $\langle h_{1}, q_{2} \rangle^{-1}$ in the distance appeared in the above expression. On the other hand, the vector $Z_{2}  \langle h_{1}, q_{2} \rangle  \langle h_{1}, Z_{2} \rangle^{-1} \in \R^{3}$ does not necessarily lie in $V_{2}$. 

To define a system in $V_{2}$, it is desired to define a distance on $V_{2}$ such that  
\begin{equation}\label{eq: desired property}
\|  q_{2}-Z_{2} \|_{2}=\langle h_{1}, q_{2} \rangle \|  q_{2} \langle h_{1}, q_{2} \rangle^{-1}-Z_{2} \langle h_{1}, Z_{2} \rangle^{-1}\|_{1}.
\end{equation}
In order to achieve this, we extend the Eucldean distance $\|\cdot\|_{1}$ in a non-standard way: Each vector $v \in \R^{3}$ is decomposed as $v=v_{1} + c \cdot Z_{1}$. We set $\|v\|_{*}=\|v_{1}\|_{1}$. Consequently we set $\|\cdot\|_{2}$ as the restriction of $\|\cdot\|_{*}$ to $V_{2}$. Clearly $\|\cdot\|_{2}$, which is non-degenerate whenever $V_{1}$ and $V_{2}$ are not perpendicular, has the desired property \eqref{eq: desired property}.

We thus deduce from Eq. \eqref{eq: q2''central}, by setting $m_{2}=m_{1} \langle h_{1}, Z_{2} \rangle^{-1}$, that
\begin{equation}\label{eq: q2''central}
\begin{aligned}
q^{''}_{2}=\alpha m_{1} \langle h_{1}, Z_{2} \rangle^{-1} \langle h_{1}, q_{2} \rangle^{-3-(\alpha-2)} \|q_{2}-Z_{2}\|^{\alpha-2} (q_{2}-Z_{2})=  \langle h_{1}, q_{2} \rangle^{-1-\alpha} \hbox{grad } m_{2}   \|q_{2}-Z_{2}\|_{2}^{\alpha},
\end{aligned}
\end{equation}
which is again the force field of a central force problem with potential when $\alpha=-1$, \emph{i.e.} when the system is the Coulomb-Kepler problem. 

We summarize
\begin{prop}\label{prop: plane to plane} A Kepler-Coulomb problem with mass $m_{1}$ and center $Z_{1}$ in a half-plane in $V_{1}$ is centrally projected to a Kepler-Coulomb problem with mass $m_{2}$ and center $Z_{2}$ in a half-plane in $V_{2}$.
\end{prop}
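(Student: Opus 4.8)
The plan is to push the general computation \eqref{eq: q2''central} to its conclusion in the Kepler-Coulomb case $\alpha=-1$, where the obstructing factor disappears. First I would record the elementary geometric fact already implicit above: the central projection $q_1 \mapsto q_2 = \lambda(q_1)^{-1} q_1$, $\lambda(q_1)=\langle h_2, q_1\rangle$, restricts to a diffeomorphism from the half-plane $\{q_1 \in V_1 : \langle h_2, q_1\rangle > 0\}$ onto the half-plane $\{q_2 \in V_2 : \langle h_1, q_2\rangle > 0\}$, with inverse $q_2 \mapsto q_1 = \langle h_1, q_2\rangle^{-1} q_2$, a projection of the same type. The center $Z_1$, taken to lie in this half-plane as in the statement, is carried to $Z_2 = \langle h_2, Z_1\rangle^{-1} Z_1$, and since $\langle h_1, Z_1\rangle = 1$ we get $\langle h_1, Z_2\rangle = \langle h_2, Z_1\rangle^{-1} > 0$, so that $m_2 = m_1 \langle h_1, Z_2\rangle^{-1}$ is well-defined and of the same sign as $m_1$.

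For the dynamical content I would take an orbit $a(t)$ of the planar Kepler-Coulomb problem $\bigl(V_1 \cap \{\langle h_2,\cdot\rangle>0\},\, g_1,\, m_1\|\cdot - Z_1\|_1^{-1}\bigr)$, form $b(t) = \phi\circ a(t)$ on $V_2$, and change time by $\frac{d}{d\tau} = \lambda(q_1)^2 \frac{d}{dt}$. Equation \eqref{eq: q2''central} with $\alpha = -1$ then reads
\[
q_2'' = \langle h_1, q_2\rangle^{-1-\alpha}\,\hbox{grad } m_2\|q_2 - Z_2\|_2^{-1} = \hbox{grad } m_2\|q_2 - Z_2\|_2^{-1},
\]
since the exponent $-1-\alpha$ vanishes exactly at $\alpha=-1$; here $\hbox{grad }$ and $\|\cdot\|_2$ refer to the flat metric $g_2$ on $V_2$ obtained by restricting the form $\|\cdot\|_*$ that splits off the $Z_1$-direction, which by construction has property \eqref{eq: desired property}. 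Fixing a linear isometry $(V_2, g_2) \cong (\R^2, g_{flat})$ then identifies the reparametrized curve $b(\tau)$ with an orbit of the Kepler-Coulomb problem of mass $m_2$ and center $Z_2$, giving the forward half of the correspondence. The backward half is immediate, since at each fixed point $\phi$ together with the positive scalar factor $\lambda(q_1)^2$ is a linear isomorphism of the velocity spaces, so the assignment on phase space is a bijection and every orbit of the target arises from a unique orbit of the source.

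The one point that needs genuine attention rather than computation is that the form $\|\cdot\|_2$ really defines a (non-degenerate) Euclidean metric on $V_2$: as noted in its construction, this holds precisely when $V_1$ and $V_2$ are not perpendicular, and I would keep this as a standing hypothesis. Apart from that, the proof amounts to the observation that the factor $\langle h_1, q_2\rangle^{-1-\alpha}$ appearing in \eqref{eq: q2''central} --- which for general $\alpha$ prevents the reparametrized force field from being the $g_2$-gradient of any function --- collapses to $1$ exactly in the Kepler-Coulomb case, so the main difficulty has in fact already been dispatched in the analysis preceding the statement, and nothing further is needed once $\alpha=-1$ is imposed.
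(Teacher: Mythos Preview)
Your proposal is correct and follows exactly the paper's approach: the proposition is stated in the paper as a summary of the preceding computation, and you have simply recapitulated that computation while making explicit the specialization $\alpha=-1$ that kills the factor $\langle h_1,q_2\rangle^{-1-\alpha}$. The only addition is your remark on the backward direction via bijectivity, which is harmless and consistent with the paper's framing of correspondence.
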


We now discuss central projection from a hemisphere to a plane. We take $S$ to be the unit sphere in $\R^{3}(x, y, z)$. We consider the plane $V:=\{z=-1\}$ (so $h=(0,0,-1)$) tangent to $S$ at its south pole $(0, 0, -1)$ which we take as the center $Z$. The central projection from the origin $O \in \R^{3}$ defines a diffeomorphism between the south-hemisphere $S_{SH}:=S \cap \{z <0\}$ and $V$. It is well-known that this projection sends unparametrized geodesics to unparametrized geodesics.

We now let $q_{1} \in V$ and $q_{2} \in S_{SH}$ be related by the central projection as: 
\begin{equation}\label{eq: time change}
q_{2}= (1+ \|q_{1}-Z\|^{2})^{-1/2} q_{1}:= \lambda(q_{1})^{-1} q_{1}.
\end{equation}
Again we have
$$\dot{q}_{2}=\lambda(q_{1})^{-2} (\dot{q}_{1} \lambda(q_{1})-\langle \hbox{grad } \lambda(q_{1}), \dot{q}_{1} \rangle q_{1}),$$
and we may again change time according to $\dfrac{d}{d \tau}=\lambda(q_{1})^{2} \dfrac{d}{d t}$ which then leads to the equation
\begin{equation}\label{eq: q2'' sph}
q^{''}_{2}=\lambda(q_{1})^{2} (\lambda(q_{1}) \ddot{q}_{1}-\langle \hbox{grad } \lambda(q_{1}), \ddot{q}_{1} \rangle q_{1}).
\end{equation}
The last term of the right hand side of this equation is proportional to $q_{2}$, thus is vertical to $S_{SH}$. It can be seen as a force of constraint which keeps the force to be tangent to $S_{SH}$. By replacing $\ddot{q}_{1}$ with a force field, we see that with this computation, any force field in $V$ is transformed into a force field of $S_{SH}$ and vice versa. 

Now start with the Kepler-Coulomb problem in $V$, \emph{i.e.} 
$$\ddot{q}_{1}=-m \|q_{1}-Z\|^{-3} (q_{1}-Z),$$
we have 
$$\lambda(q_{1})^{3} \ddot{q}_{1}=-m \sin^{-3} \theta \cdot (q_{1}-Z),$$
in which $\theta$ denotes the angle $\angle ZOq_{1}$, whose component tangent to $S_{SH}$ at $q_{2}$ gives the force, which has norm $|m| \sin^{-2} \theta$, points toward $Z$ when $m>0$, and points in the reverse direction when $m<0$. This is a central force system on $S_{SH}$ with force function $m \cot \theta$.  We conclude that the resulting system is the spherical Kepler-Coulomb problem with mass factor $m$, a system first defined by Serret \cite{Serret}, on the hemisphere $S_{SH}$.  

In the spherical Kepler-Colomb problem, the force function $m \cot \theta$ (and indeed the whole system) extends to the whole $S$ except at the two poles, which are singular centers for the extended systems, one attractive and the other one repulsive. The sign of the mass factor $m$ now determines which center is attractive and which is repulsive. The orbits of this system are (possibly degenerate) spherical (conics and more precisely) ellipses. Elliptic, parabolic and hyberbolic orbits of the Kepler-Coulomb problem in $V$ correspond respectively to intersections with $S_{SH}$ of spherical ellipses lying entirely in $S_{SH}$, tangent to the equator and intersecting transversely the equator respectively. In this sense, the spherical Kepler-Coulomb problem completes and extends the corresponding planar Kepler-Coulomb problem.  We refer to \cite{Albouy} for more detailed analysis on the spherical Kepler-Coulomb problem.

We summarize the above analysis in the following proposition:
\begin{prop}\label{prop: plane to sphere} A Kepler-Coulomb problem in $V$ with mass factor $m$ and center at the south pole $(0,0,-1)$ is centrally projected to a spherical Kepler-Coulomb problem in $S_{SH}$ with mass factor $m$ and with center at $(0,0,-1)$. 
\end{prop}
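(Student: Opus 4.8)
The plan is to carry out explicitly the central‑projection computation sketched just above the statement, checking that the Kepler‑Coulomb structure survives each step. Set $\lambda(q_1)=(1+\|q_1-Z\|^2)^{1/2}$, so that $q_2=\lambda(q_1)^{-1}q_1$ lands on the unit sphere $S$, and on the south hemisphere $S_{SH}$ precisely when $q_1\in V=\{z=-1\}$; here $Z=(0,0,-1)$ is simultaneously the south pole of $S$ and the center of the planar problem. Differentiating twice and reparametrizing time by $\tfrac{d}{d\tau}=\lambda(q_1)^2\tfrac{d}{dt}$ produces Eq.~\eqref{eq: q2'' sph}, whose last term is proportional to $q_2$, hence normal to $S_{SH}$: it is the force of constraint, and discarding it leaves the genuine tangential force on the sphere. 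Since every force field on $V$ is carried to a force field on $S_{SH}$ in this way and the construction is reversible, it suffices to identify the image of the Kepler‑Coulomb force.

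Substituting $\ddot q_1=-m\|q_1-Z\|^{-3}(q_1-Z)$ and using the elementary relation $\|q_1-Z\|=\tan\theta$, where $\theta=\angle ZOq_1$ is the spherical distance from the south pole to $q_2$ (so that $\lambda(q_1)=\sec\theta$), one gets $\lambda(q_1)^3\ddot q_1=-m\sin^{-3}\theta\,(q_1-Z)$. It then remains to decompose $q_1-Z$ into its components normal and tangential to $S$ at $q_2$: a short trigonometric computation in the plane through $O$, $Z$, $q_1$ gives $\langle q_1-Z,\,q_2\rangle=\sin\theta\tan\theta$ and hence a tangential component of norm $\sin\theta$, pointing along the meridian through $q_2$ toward $Z$. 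The tangential force therefore has norm $|m|\sin^{-2}\theta$, directed toward $Z$ for $m>0$ and away from it for $m<0$. Since $-\tfrac{d}{d\theta}(m\cot\theta)=m\sin^{-2}\theta$ and $\theta$ is arc length from $Z$ along great circles, this is exactly the gradient on $S$ of the force function $U_2=m\cot\theta$, i.e.\ Serret's spherical Kepler‑Coulomb problem with mass factor $m$ and center $Z$; since $m\cot\theta$ extends smoothly to $S$ minus the two poles, so does the system.

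The only nontrivial point is the decomposition of the radial vector $q_1-Z$ at $q_2$ and the verification that its tangential part has norm exactly $\sin\theta$ uniformly in $q_1$; this is where the power $\sin^{-2}\theta$ — rather than some other exponent — gets pinned down, and hence where the Kepler‑Coulomb exponent $\alpha=-1$ is seen to be the one that yields a central force problem with potential on the sphere. Everything else is formal: the identification with Serret's system, its extension past the equator, and the correspondence between elliptic, parabolic, and hyperbolic planar orbits and spherical ellipses lying in, tangent to, or transverse to the equator all follow by comparing force functions and by the projective description of the orbits as plane sections.
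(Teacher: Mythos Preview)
Your proposal is correct and follows essentially the same route as the paper: the same central projection, the same time change, the same identification of the normal constraint term in Eq.~\eqref{eq: q2'' sph}, and the same substitution of the Kepler--Coulomb force leading to $\lambda^3\ddot q_1=-m\sin^{-3}\theta\,(q_1-Z)$. Your added computation of $\langle q_1-Z,q_2\rangle=\sin\theta\tan\theta$ and the resulting tangential norm $\sin\theta$ makes explicit the step the paper leaves to the reader when passing from $\sin^{-3}\theta$ to the tangential force of norm $|m|\sin^{-2}\theta$.
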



We now construct a projection in which the south pole is not necessarily the center of the spherical Kepler-Coulomb problem.
We let $Z=(0, a, -1) \in V$ be the center in $V$, which is the projection of the center $Z_1:=(0, \dfrac{a}{\sqrt{1+a^{2}}}, -\dfrac{1}{\sqrt{1+a^{2}}})$ in $S_{SH}$. With the previous procedure, the spherical Kepler-Coulomb force field is projected to an analytic force field $F$ in $V$. To deduce $F$ without additional computation, we proceed with steps. We set $h_{1}=(0, \dfrac{a}{\sqrt{1+a^{2}}}, -\dfrac{1}{\sqrt{1+a^{2}}})$. The affine plane $V_{1}=\{q: \langle h_{1}, q \rangle =1\}$ is now tangent to $S_{SH}$ at $Z_1$. We first project from $S_{SH}$ to $V_{1}$, then from $V_{1}$ to $V$ (See Figure \ref{Figure: ProjSph}). Note that due to the composition of two different projections, a priori only an open subset of $S_{SH}$ is projected to an open subset of $V$. Nevertheless, the previous analysis shows that the force field $F$ coincides with that of the planar Kepler-Coulomb problem with mass factor $m$ on an open subset of $V$. Consequently by analyticity, $F$ is derived from the Kepler-Coulomb potential with mass factor $m$ in $V$. 

\begin{figure}\label{Figure: ProjSph}
\center
\includegraphics[width=80mm]{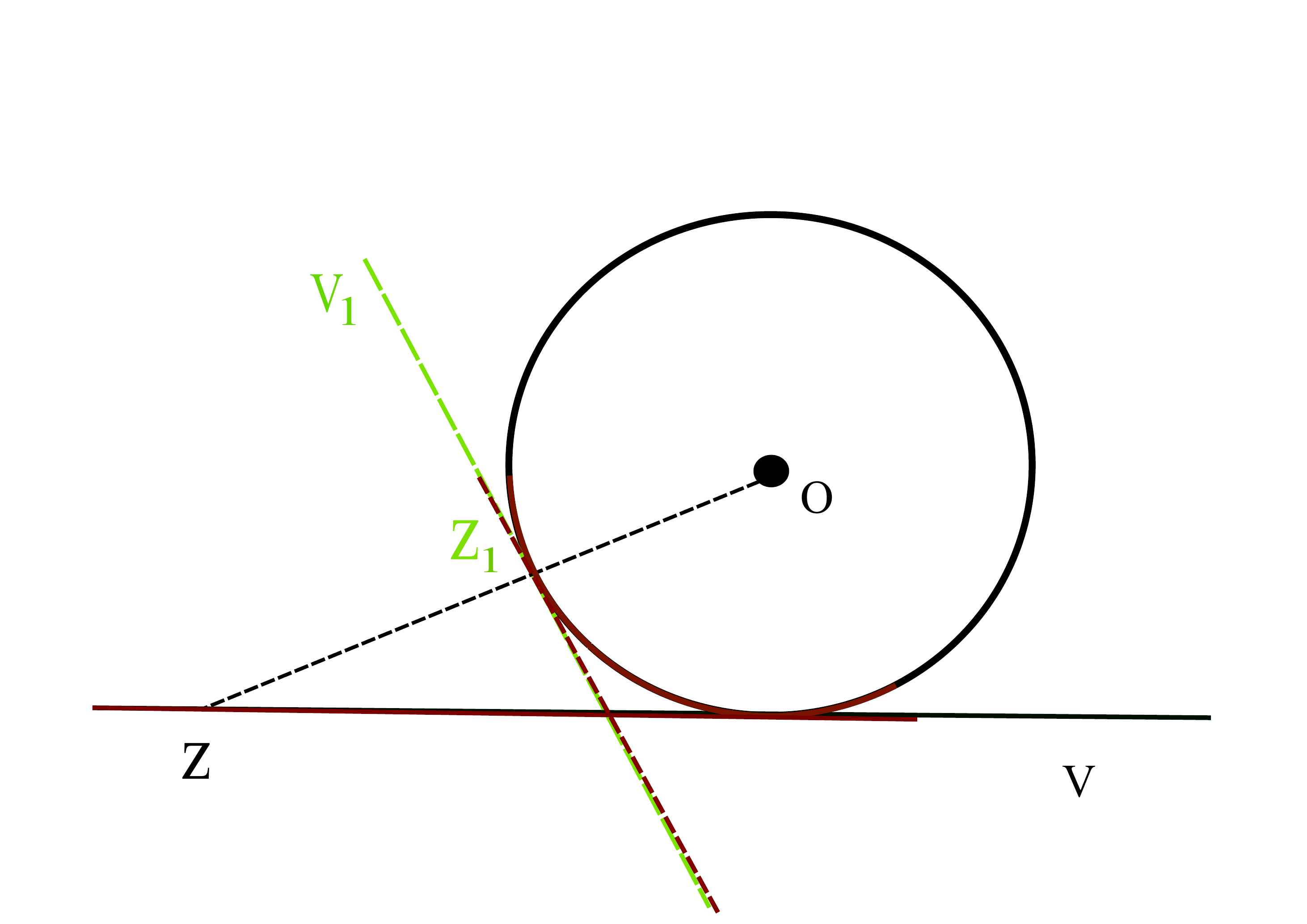}
\caption{A Plane-Sphere Projection}
\end{figure} 

We thus obtain the following proposition by combining Prop \ref{prop: plane to plane} and Prop \ref{prop: plane to sphere}.
\begin{prop}The planar Kepler-Coulomb problem in $V$ with mass factor $m$ and center $(0, a, -1)$ is centrally projected from the spherical Kepler-Coulomb problem on $S_{SH}$ with mass factor $m'=m \sqrt{1+a^{2}}$ and center $(0, \frac{a}{\sqrt{1+a^{2}}}, -\frac{1}{\sqrt{1+a^{2}}})$.
\end{prop}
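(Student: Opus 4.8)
The plan is to factor the central projection $S_{SH}\to V$ through the auxiliary plane $V_{1}=\{q:\langle h_{1},q\rangle=1\}$ with $h_{1}=(0,\frac{a}{\sqrt{1+a^{2}}},-\frac{1}{\sqrt{1+a^{2}}})=Z_{1}$, which is tangent to $S$ at the point $Z_{1}\in S_{SH}$, and then to apply Proposition~\ref{prop: plane to sphere} to the projection $S_{SH}\to V_{1}$ and Proposition~\ref{prop: plane to plane} to the projection $V_{1}\to V$. All three central projections share the center $O$, so for any $w\in S$ whose ray from $O$ through $w$ meets $V_{1}$ and $V$ on the appropriate sides, projecting $w$ first to $V_{1}$ and then to $V$ produces the same point of $V$ as projecting $w$ directly; hence the composed map agrees with the direct central projection on the nonempty open set $W\subseteq S_{SH}$ on which both intermediate projections are defined (in particular where the half-plane conditions of Proposition~\ref{prop: plane to plane} hold). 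Since correspondence of mechanical systems is preserved under composition and under restriction to open subsets, it suffices to transport the spherical Kepler-Coulomb problem across the two factors in turn and then to propagate the resulting identity off $W$.

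For the first factor, one picks $R\in SO(3)$ with $R(0,0,-1)=Z_{1}$; as $R$ is an isometry of $(\R^{3},\|\cdot\|)$ and of $S$ fixing $O$, conjugating Proposition~\ref{prop: plane to sphere} by $R$ shows that the spherical Kepler-Coulomb problem with center $Z_{1}$ and mass factor $\mu$ on the hemisphere $\{q\in S:\langle Z_{1},q\rangle>0\}$ is in correspondence, via central projection from $O$, with the planar Kepler-Coulomb problem in $V_{1}$ with center $Z_{1}$ and mass factor $\mu$ (for the Euclidean metric on $V_{1}$ induced from $\R^{3}$); since the spherical force field is real-analytic on $S\setminus\{Z_{1},-Z_{1}\}$ and $-Z_{1}\notin S_{SH}$, this applies over $W$. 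For the second factor, Proposition~\ref{prop: plane to plane} applied to $V_{1}\to V$ with source covector $h_{1}=Z_{1}$ and target covector $h=(0,0,-1)$ carries the center $Z_{1}\in V_{1}$ to $\langle h,Z_{1}\rangle^{-1}Z_{1}=\sqrt{1+a^{2}}\,Z_{1}=(0,a,-1)=Z\in V$ and multiplies the mass factor by $\langle h_{1},Z\rangle^{-1}$; a direct computation gives $\langle h_{1},Z\rangle=\langle Z_{1},Z\rangle=\frac{a^{2}+1}{\sqrt{1+a^{2}}}=\sqrt{1+a^{2}}$, so the planar Kepler-Coulomb problem in $V_{1}$ with center $Z_{1}$ and mass factor $\mu$ corresponds to the planar Kepler-Coulomb problem in $V$ with center $Z$ and mass factor $\mu/\sqrt{1+a^{2}}$, for the flat metric $\|\cdot\|_{2}$ furnished by the construction preceding Proposition~\ref{prop: plane to plane}, which is non-degenerate since $V_{1}$ and $V$ are not perpendicular. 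Taking $\mu=m\sqrt{1+a^{2}}$ and composing, the spherical Kepler-Coulomb problem on $S_{SH}$ with center $Z_{1}$ and mass factor $m'=m\sqrt{1+a^{2}}$ corresponds, over $W$, to the planar Kepler-Coulomb problem in $V$ with center $Z$ and mass factor $m$.

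The remaining point is to remove the restriction to $W$, and this is where the only real difficulty lies: the two intermediate central projections have incompatible natural domains, so the composed map covers only a proper open part of $S_{SH}$, and the coincidence of the two planar systems can be established only locally before being propagated by analyticity. To conclude, one observes that pushing the spherical Kepler-Coulomb force field forward under the \emph{direct} central projection $S_{SH}\to V$, together with the induced reparametrization of time, yields a force field on $V$ that is real-analytic on the connected set $V\setminus\{Z\}$, because the projection map and the reparametrization factor are real-analytic and nowhere vanishing there while the second, repulsive spherical center $-Z_{1}$ does not lie over $V$; the Kepler-Coulomb force field in $V$ with center $Z$ and mass factor $m$ is likewise real-analytic on $V\setminus\{Z\}$, and by the previous step the two coincide on the nonempty open set that is the image of $W$. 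By the identity principle for real-analytic maps they coincide on all of $V\setminus\{Z\}$, which proves the proposition. One should bear in mind throughout that Proposition~\ref{prop: plane to plane} returns the target Kepler-Coulomb problem relative to the flat metric $\|\cdot\|_{2}$ rather than the ambient Euclidean metric of $V$, a distinction immaterial up to a linear isometry.
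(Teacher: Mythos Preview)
Your argument is correct and follows essentially the same route as the paper: factor the central projection $S_{SH}\to V$ through the tangent plane $V_{1}$ at $Z_{1}$, apply Proposition~\ref{prop: plane to sphere} (rotated) to the first leg and Proposition~\ref{prop: plane to plane} to the second, then invoke real-analyticity to pass from the open subset where the composition is defined to all of $V\setminus\{Z\}$. Your version is more explicit about the rotation needed to invoke Proposition~\ref{prop: plane to sphere} at a non-polar tangent point, about the mass-factor bookkeeping $\mu\mapsto\mu/\sqrt{1+a^{2}}$, and about why the analyticity argument is legitimate (the direct projection $S_{SH}\to V$ is analytic and the repulsive pole $-Z_{1}$ does not project into $V$), but the underlying strategy is the same as the paper's.
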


As a consequence, the energies $E_{pl}, E_{sph}$ of the planar and the spherical problems are both first integrals of the planar Kepler-Coulomb problem. They are also functionally independent, as can be observed with their explicit expressions in a chart which we shall describe later.

Moreover, we shall see that $\{E_{pl}, E_{sph}\}$ is equivalent to $\{E_{pl}, D\}$ where $D$ is Gallavotti-Jauslin's first integral for the integrable Boltzmann billiard model, constructed with an analysis of the geometry of ellipses \cite{Gallavotti-Jauslin}. Our result shows that the same is true for parabolic and hyperbolic orbits of the planar Kepler-Coulomb problem as well.

We now go through necessary computations. Let $S$ be the unit sphere in $\R^{3}$ and $V_{1}$  be the affine plane tangent to $S$ at the point $Z_{1}=\Bigl(0, \dfrac{a}{\sqrt{1+a^{2}}}, -\dfrac{1}{\sqrt{1+a^{2}}}\Bigr)$ which is the center of the spherical Kepler-Coulomb problem with mass $m'$. We equip both $S$ and $V_{1}$ with the induced metrics from the standard Euclidean metric of $\R^{3}$. We take $V=\{z=-1\}$, which is tangent to $S$ at $(0,0,-1)$ on which there defines a Kepler-Coulomb problem with center $Z:=(0, a,-1)$ and with mass $m$. The metric $\|\cdot\|_{2}$ in $V$ will not be the one induced from the standard Euclidean metric of $\R^{3}$, but the one defined by Prop. \ref{prop: plane to plane}, which we shall now compute. 

According to the construction we first extend the Euclidean form $\|\cdot\|_{1}$ of $V_{1}$ to a non-standard form $\|v\|_{*}$ on $\R^{3}$, by setting for $v \in \R^{3}$, that $\|v\|_{*}=\|v-\langle v, Z_{1} \rangle Z_{1}\|_{1}$. Now for $v =(x, y, -1) \in V$, we have 
$$v-\langle v, Z_{1} \rangle Z_{1}=\Bigl(x, y-\dfrac{a (ay+1)}{1+a^{2}},-1+\dfrac{ay+1}{1+a^{2}}\Bigr)=\Bigl(x, \dfrac{y-a}{1+a^{2}},\dfrac{a(y-a)}{1+a^{2}}\Bigr),$$ 
thus
$$\|v\|_{2}=\|v\|_{*}=\sqrt{x^{2}+\dfrac{(y-a)^{2}}{1+a^{2}}}.$$

We therefore have that the energy of the planar Kepler-Coulomb problem in $(V, \|\cdot\|_{2})$ is
\begin{equation}\label{eq: E_{pl,2}}
E_{pl, 2}=\dfrac{1}{2} (\dot{x}^{2}+\dfrac{\dot{y}^{2}}{1+a^{2}})-\dfrac{m}{\sqrt{x^{2}+\dfrac{(y-a)^{2}}{1+a^{2}}}}.
\end{equation}

We now express the energy of the spherical Kepler-Coulomb problem in $S_{SH}$ with center $Z_{1}$ and with mass $m'$ in $V$, by viewing $V$ as a gnomonic chart for $S_{SH}$ as given by the central projection. 

In the gnomonic chart $V$, the metric of $S_{SH}$ takes the form 
$$\dfrac{1}{(1+x^{2}+y^{2})^{2}} ((1+y^{2}) d x^{2} -2 xy d x d y + (1+x^{2}) d y^{2}).$$ 
The kinetic energy of the system thus takes the form
\begin{equation}\label{eq: kinetic sph}
\dfrac{1}{2(1+x^{2}+y^{2})^{2}} ((1+y^{2}) {x'}^{2} -2 xy x' y' + (1+x^{2}) d {y'}^{2})
\end{equation}
in which $'$ denotes the derivative with respect to the time on $S_{SH}$. The factor of time change is obtained via \eqref{eq: time change}:  
$$(x', y')=(1+x^{2}+y^{2}) (\dot{x}, \dot{y}).$$
Thus the kinetic energy is expressed equivalently as
$$ \dfrac{1}{2}((1+y^{2}) \dot{x}^{2} -2 xy \dot{x} \dot{y} + (1+x^{2}) \dot{y}^{2})=\dfrac{1}{2} \Bigl((\dot{x}^{2}+\dot{y}^{2})+(x \dot{y}-y \dot{x})^{2}\Bigr).$$
We see that this is the sum of the kinetic energy in the plane and half of the square of the angular momentum in the plane with respect to the point $(x, y)=(0,0)$. 

The potential on the sphere is $-m' \cot \theta$, where $\theta$ is the angle between the position of the moving particle $\dfrac{1}{\sqrt{x^{2}+y^{2}+1}}(x, y, -1)$ on $S_{SH}$ and the center $Z_{1}$. This is expressed in terms of $(x, y)$ as
$$-m' \dfrac{a y+1}{ \sqrt{(y-a)^{2}+(1+a^{2}) x^{2}}},$$
thus the spherical energy has expression
$$E_{sph, 2}:=\dfrac{1}{2} ((1+y^{2}) \dot{x}^{2} -2 xy \dot{x} \dot{y} + (1+x^{2}) \dot{y}^{2}) -m' \dfrac{a y+1}{ \sqrt{(y-a)^{2}+(1+a^{2}) x^{2}}}.$$

We now normalize the metric in $V$ to a standard Euclidean one via the affine transformation 
$$(\xi, \eta) \mapsto (x=\xi, y=\sqrt{1+a^{2}} \eta+a),$$ which implies $(\dot{x}, \dot{y})=(\dot{\xi}, \sqrt{1+a^{2}} \dot{\eta}).$ The planar and spherical energies now respectively take the forms (with relation $m'= \sqrt{1+a^{2}}\, m$)
$$E_{pl}=\dfrac{1}{2} (\dot{\xi}^{2}+\dot{\eta}^{2})-\dfrac{m}{\sqrt{\xi^{2}+\eta^{2}}},$$
and
$$E_{sph}=(1+a^{2}) \Bigl(\dfrac{1}{2}  (\dot{\xi}^{2}+\dot{\eta}^{2}) -\dfrac{m}{\sqrt{\xi^{2}+\eta^{2}}} \Bigr) + \dfrac{(1+a^{2})}{2} \Bigl((\xi \dot{\eta} - \eta \dot{\xi})^{2}-\dfrac{2 a}{\sqrt{1+a^{2}}}((\xi \dot{\eta} - \eta \dot{\xi}) \dot{\xi}+\dfrac{m \eta}{\sqrt{\xi^{2}+\eta^{2}}})\Bigr).$$

Set $L=\xi \dot{\eta} - \eta \dot{\xi}$ the angular momentum with respect to $(\xi, \eta)=(0, 0)$ and $A_{\eta}=-L \dot{\xi}-\dfrac{m \eta}{\sqrt{\xi^{2}+\eta^{2}}}$ the $\eta$-component of the Laplace-Runge-Lenz vector. Let $h=-\dfrac{a}{\sqrt{1+a^{2}}}$ be the relative distance from the center $(0, a, -1)$ to the line $\mathcal{L}:=\{(x, 0, -1)\}$ in $V_{2}$ with respect to the metric $\| \cdot \|_{2}$, and $D=L^{2}-2 h A_{\eta}$. We have
\begin{equation}\label{Eq: Esp D}
E_{sph}=(1+a^{2}) (E_{pl} + \dfrac{D}{2}).
\end{equation}
Consequently for the planar system, we have that 

\begin{prop} The pair of independent first integrals $\{E_{pl}, D\}$ is equivalent to the pair of independent first integrals $\{E_{pl}, E_{sph}\}$. 
\end{prop}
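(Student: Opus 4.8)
The plan is to reduce the equivalence of the two pairs to the single algebraic identity \eqref{Eq: Esp D} already displayed, and then to record functional independence; there is essentially no further content beyond what was extracted in deriving \eqref{Eq: Esp D}.

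First I would verify \eqref{Eq: Esp D} by direct substitution. Starting from the expression for $E_{sph}$ in the $(\xi,\eta)$-coordinates obtained above, I would isolate the terms not already contained in $(1+a^{2})E_{pl}$, namely $\frac{1+a^{2}}{2}\bigl((\xi\dot\eta-\eta\dot\xi)^{2}-\frac{2a}{\sqrt{1+a^{2}}}((\xi\dot\eta-\eta\dot\xi)\dot\xi+\frac{m\eta}{\sqrt{\xi^{2}+\eta^{2}}})\bigr)$. Using $L=\xi\dot\eta-\eta\dot\xi$, $A_{\eta}=-L\dot\xi-\frac{m\eta}{\sqrt{\xi^{2}+\eta^{2}}}$ and $h=-\frac{a}{\sqrt{1+a^{2}}}$, the middle term rewrites as $2h(L\dot\xi+\frac{m\eta}{\sqrt{\xi^{2}+\eta^{2}}})=-2hA_{\eta}$, so the bracket becomes $L^{2}-2hA_{\eta}=D$, which is exactly $E_{sph}=(1+a^{2})(E_{pl}+\frac{D}{2})$. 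The only point needing a little care is matching the sign in the Laplace--Runge--Lenz term; everything else is bookkeeping, and it is handled once and for all by the affine normalization $(\xi,\eta)\mapsto(x=\xi,\,y=\sqrt{1+a^{2}}\,\eta+a)$ used above.

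Second, from \eqref{Eq: Esp D} the substitution $(E_{pl},D)\mapsto(E_{pl},E_{sph})$ is affine and block-triangular with diagonal entries $1$ and $\frac{1+a^{2}}{2}\neq 0$, hence invertible, with inverse $D=\frac{2}{1+a^{2}}E_{sph}-2E_{pl}$. Thus each member of one pair is a function of the other pair, so $\{E_{pl},D\}$ and $\{E_{pl},E_{sph}\}$ are equivalent as collections of first integrals. For independence it then suffices to check that $dE_{pl}$ and $dD$ are linearly independent at some phase point, since an invertible affine change of the integrals preserves the rank of their Jacobian; I would exhibit such a point using the explicit formula for $D$ in the $(\xi,\eta)$ chart, e.g. a state with $L\neq0$ where a suitable $2\times2$ minor of the $2\times4$ Jacobian is nonzero.

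I do not expect a genuine obstacle: the substantive step is the identity \eqref{Eq: Esp D}, whose verification is a short computation, and the remainder is linear algebra together with a one-point nondegeneracy check.
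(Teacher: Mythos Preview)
Your proposal is correct and follows exactly the paper's approach: the proposition is stated in the paper as an immediate consequence of the identity \eqref{Eq: Esp D}, with functional independence noted earlier from the explicit coordinate expressions. Your write-up simply spells out the invertible affine relation and the independence check that the paper leaves implicit.
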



\section{Integrable Billiard Models with Potentials in the Plane}\label{Sec: Int Models plan}

We now define certain billiard models in the plane with Kepler-Coulomb problem with mass factor $m$ and center $Z$. For this, it is enough to specify the wall of reflection and on which side of the wall the dynamics defining the billiard system takes place. At this generality, not all orbits which start from a point on the wall comes back to hit the wall again, so that the billiard mapping is possibly not always defined. Nevertheless we also accept such cases for uniformity. 

Such a billiard system is called \emph{integrable} when there exist two functionally independent conserved quantities. Note that the energy is always a conserved quantity. Therefore the system is integrable when an additional conserved quantity functionally independent from the energy can be found.

We consider the following billiard models with potentials (See Figure \ref{Figure: Five}): We take a Kepler-Coulomb problem in the plane, in which the mass factor can take either positive or negative signs. The wall of reflection is taken either as a circle $\mathcal{C}$ centered at the center of the Kepler-Coulomb problem (below we call such $\mathcal{C}$ centered circle), or a line $\mathcal{L}$, which can take any positions and the dynamics on either sides of the wall can be taken to define the billiard system. 

In order to deal with possible collisions with the center $Z$ in the line case, we have to invoke a regularization. Standard Levi-Civita regularization \cite{levi-civita2} does well the work, and gives a continuation of an orbit running into a collision with the center by an orbit ejecting from the collision by elastic bouncing. When the wall of reflection contains the center $Z$, any orbit which passes through a point in the line of reflection different from $Z$ will never meet $Z$, therefore it is also harmless to delete $Z$ from the line of reflection and consider only the rest, non-collisional orbits.

The integrable Boltzmann model corresponds to the case that the mass factor is negative, with a line as the wall of reflection, and (preferably) with the dynamics on the side of the line not containing the center $Z$.


\begin{figure}\label{Figure: Five}
\center
\includegraphics[width=80mm]{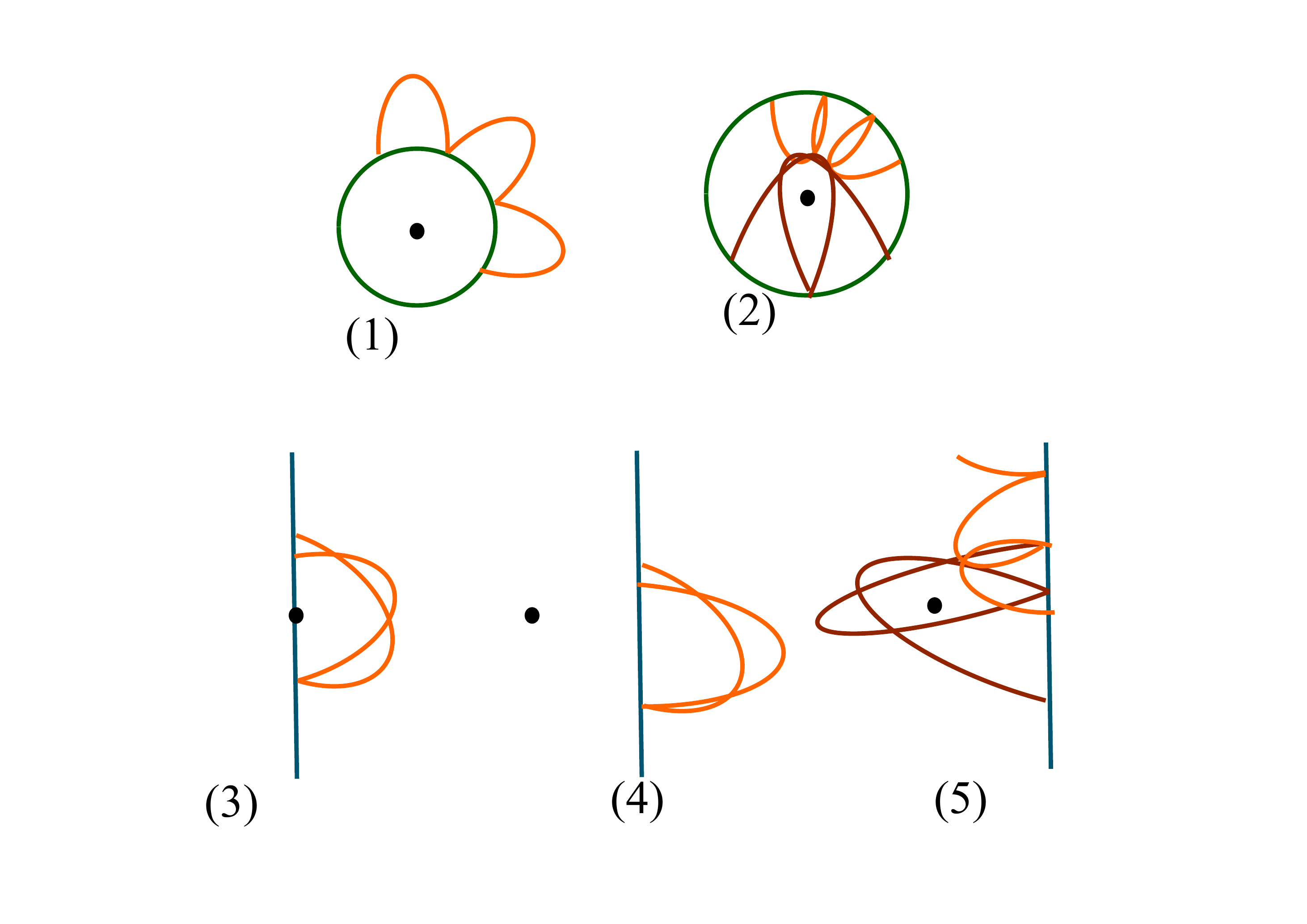}
\caption{Some planar billiard systems}
\end{figure} 


\begin{theo} Any of the planar billiard systems thus defined is integrable. The first integrals are its energy together with the energy of a spherical corresponding system.
\end{theo}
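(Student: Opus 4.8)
The plan is to follow a billiard trajectory across its alternating Kepler--Coulomb arcs and wall reflections, and to show that along the whole trajectory both the planar energy $E_{pl}$ and the energy $E_{sph}$ of a spherical Kepler--Coulomb problem are conserved, these two being functionally independent. Along each Kepler--Coulomb arc there is nothing new to prove: by Section \ref{Sec: 2}, the planar Kepler--Coulomb problem is in correspondence with a spherical Kepler--Coulomb problem --- via Proposition \ref{prop: plane to sphere} (with $Z$ at the pole) in the centered-circle case, and via the combination of Propositions \ref{prop: plane to plane} and \ref{prop: plane to sphere} in the line case --- so both energies are first integrals of the planar flow, and they are functionally independent as already recorded in Section \ref{Sec: 2}. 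The whole question is thus reduced to the behaviour of $E_{sph}$ under a single reflection off the wall.

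The heart of the matter is that the central projection $\phi$ carries the planar billiard to a Riemannian billiard on the sphere: the wall is sent to a great-circle arc in the line case, and to a geodesic circle about the spherical center in the centered-circle case, and the usual planar reflection off the wall agrees under $\phi$ with the Riemannian reflection off its image. This is a compatibility of normals. In the gnomonic chart adapted to the wall --- the chart $(x,y)$ with the line at $\{y=0\}$, or polar coordinates $(r,\theta)$ about the center with the circle at $\{r=R\}$ --- the round metric restricts along the wall to a form with no cross term, so that its normal direction there coincides with the Euclidean one; hence ``reflect the normal component, keep the tangential component'' is literally the same prescription for the planar and for the spherical metric. Since a Riemannian billiard reflection preserves position and speed, it preserves the spherical kinetic energy, and therefore $E_{sph}$. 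The same conclusion can be reached by a direct check on the explicit formula $E_{sph}=(1+a^{2})(E_{pl}+D/2)$, $D=L^{2}-2hA_{\eta}$, of Section \ref{Sec: 2}: at a point of the wall $\{\eta=h\}$ the reflection acts by $(\dot\xi,\dot\eta)\mapsto(\dot\xi,-\dot\eta)$, and the two off-diagonal contributions to $D$ cancel; in the centered-circle case one checks instead that a reflection off $\{\xi^{2}+\eta^{2}=R^{2}\}$ leaves the angular momentum $L$ unchanged, so that $E_{sph}=E_{pl}+L^{2}/2$ is preserved.

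It remains to treat the collisional orbits. When the wall contains $Z$ these are discarded, as noted before the statement; otherwise we invoke the Levi-Civita regularization. A colliding orbit is radial, so $L=0$ along it and the Laplace--Runge--Lenz vector, hence $A_{\eta}$, stays continuous across the regularized bounce, while $E_{pl}$ is of course continuous there; consequently $E_{sph}=(1+a^{2})(E_{pl}+D/2)$ extends continuously across the bounce as well. Collecting the pieces yields two functionally independent conserved quantities $\{E_{pl},E_{sph}\}$ of the billiard, which is the asserted integrability. I expect the reflection step to be the only genuine obstacle: a gnomonic projection is conformal only at its point of tangency, so the naive ``$\phi$ preserves angles'' argument is unavailable, and one must really use that the wall is a line or a circle centered at the Kepler center --- equivalently, that the adapted round metric has no cross term along the wall --- in order to match the two reflection laws. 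Everything else is either a direct appeal to Section \ref{Sec: 2} or a short computation.
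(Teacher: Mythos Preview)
Your argument is correct and follows essentially the same strategy as the paper: place the wall so that the gnomonic chart is adapted to it (south pole on the line, resp.\ at the circle's center), and then observe that the spherical kinetic energy \eqref{eq: kinetic sph} is invariant under the planar reflection there---your ``no cross term along the wall'' is precisely the content of the paper's appeal to \eqref{eq: E_{pl,2}} and \eqref{eq: kinetic sph}. Your additional geometric framing (matching Riemannian reflections), the direct $D$-check via \eqref{Eq: Esp D}, and the collision discussion are welcome elaborations but not a different route.
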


\begin{proof} The integrability of these systems are explained conveniently using projective dynamics. We let $V=\{z=-1\} \in \R^{3}$ be the plane in which the system is defined. 

We first consider the case that the wall is a centered circle $\mathcal{C}$. We choose it to have its center at the south pole $(0,0,-1)$ of $S$ which is thus also a center of the Kepler-Coulomb problem. Thus $a=0$ in \eqref{eq: E_{pl,2}} and the law of reflections, which preserves the planar kinetic energy is seen to also preserve the spherical kinetic energy \eqref{eq: kinetic sph}. In this case, it is also direct to see that the square of the angular momentum is preserved under reflections. 

When the wall is a line $\mathcal{L}$, we choose it to cross the south pole $(0,0,-1)$ of $S$ such that $(0,0, -1)$ is the nearest point to the center of the system $Z$ on $\mathcal{L}$. By translation and rotation we may set the line of reflection to $\mathcal{L}=\{(x,0,-1)\}$, and we may put the center at $(0, a,-1 )$. Note that in this case the plane is equipped with the metric $\|\cdot \|_{2}$ which is asymmetric, nevertheless it is seen that both the planar and spherical kinetic energies are preserved under the reflections, as follows from \eqref{eq: E_{pl,2}}, \eqref{eq: kinetic sph}.

Consequently both the planar and spherical energies are first integrals of the system in both cases.
\end{proof}

Note that in the case that the wall of reflection is a line, we may equivalently take the planar energy together with Gallavotti-Jauslin's first integral $D$ as a pair of first integrals.


\section{Integrable Billiard Models on the Sphere} \label{Sec: 4}

The construction in the last section above also suggests a family of integrable billiard systems on the sphere. 

We define billiard systems on the sphere with the spherical Kepler-Coulomb problem with either a circle whose centers agree with the centers of the Kepler-Coulomb problem (henceforth centered circles), or with any choice of a great circle as the wall of reflection, and with the natural law of reflection on the sphere, with dynamics on either side of the circle.  
When the antipodal pair of spherical Kepler-Coulomb singularities are contained in the circle of reflection, then we may either regularize collisions with the attractive center with elastic bouncing, or remove them from the circle. The repulsive center does not lie in any finite energy hypersurface of the spherical Kepler problem. 

\begin{prop} The spherical billiard systems thus defined are integrable.
\end{prop}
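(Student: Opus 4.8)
The plan is to deduce the proposition from the planar integrable billiard models of Section \ref{Sec: Int Models plan} by means of the central (gnomonic) projection studied in Section \ref{Sec: 2}. First I would normalize coordinates on the unit sphere $S\subset\R^3$ so that central projection onto the tangent plane $V=\{z=-1\}$ carries the given spherical billiard into one of the planar models already treated. In the centered-circle case, place both the attractive spherical Kepler-Coulomb center and the center of the circle at the south pole $(0,0,-1)$, so that $a=0$; in the great-circle case, place at the south pole the point of the great circle nearest to the Kepler-Coulomb center, so that the great circle is $S\cap\{y=0\}$ and the center sits on the meridian $\{x=0\}$, projecting to a point $(0,a,-1)$. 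By Proposition \ref{prop: plane to sphere} (for $a=0$), respectively by the proposition obtained in Section \ref{Sec: 2} by combining Propositions \ref{prop: plane to plane} and \ref{prop: plane to sphere} (for general $a$, with the asymmetric metric $\|\cdot\|_2$ on $V$), the spherical Kepler-Coulomb problem becomes the planar Kepler-Coulomb problem occurring in the Theorem of Section \ref{Sec: Int Models plan}; the centered circle projects to a centered circle, and the great circle projects to the line $\mathcal{L}=\{(x,0,-1)\}$ whose $\|\cdot\|_2$-nearest point to the center is $(0,0,-1)$. Thus, up to the time reparametrization $d/d\tau=\lambda(q_1)^2\,d/dt$, the spherical billiard is identified with a planar billiard already shown to be integrable (with collisions, should the attractive center lie on the wall, regularized or deleted exactly as in the planar discussion).

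Next I would check that, under this identification, the law of reflection on the sphere is carried to the law of reflection used in $V$. Since the reparametrization only rescales velocities, this reduces to verifying that the differential of the projection sends the round-metric normal to the wall onto the $\|\cdot\|_2$-normal to the image wall. For a centered circle this is clear: with $a=0$ the metric $\|\cdot\|_2$ is Euclidean and the round metric read in the gnomonic chart $V$ is invariant under rotations about the origin, so both normals are radial (and the planar reflection at a centered circle visibly preserves $E_{pl}$ and $E_{sph}$, preserving the planar kinetic energy and the squared angular momentum as noted in Section \ref{Sec: Int Models plan}). For a great circle, along $\mathcal{L}=\{y=0\}$ the round metric read in the gnomonic chart, namely $\frac{1}{(1+x^2+y^2)^2}\big((1+y^2)\,dx^2-2xy\,dx\,dy+(1+x^2)\,dy^2\big)$, has vanishing $dx\,dy$-coefficient, while $\|\cdot\|_2$ is diagonal in $(x,y)$; hence for both metrics the normal to $\mathcal{L}$ along $\mathcal{L}$ is the $y$-direction, so the spherical and planar reflections agree there. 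This is precisely why the wall must be positioned as above.

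Granting that step, I would finish as follows. The functions $E_{pl}$ and $E_{sph}$, pulled back by the projection to the phase space of the spherical billiard, are functionally independent; they are preserved by the spherical Kepler-Coulomb flow, since corresponding mechanical systems share the same two energies as first integrals (Section \ref{Sec: 2}) and these are first integrals of the corresponding planar system by the Theorem of Section \ref{Sec: Int Models plan}; and they are preserved by the reflection by the previous paragraph. This establishes integrability over the hemisphere $S_{SH}$, and the statement over all of $S$ --- in particular for orbits that enter the complementary hemisphere --- then follows by analytic continuation of the first integrals, or equivalently by running the same argument in the gnomonic chart of the opposite hemisphere. Relative to Section \ref{Sec: Int Models plan}, the one genuinely new point is the reflection-compatibility verification of the second paragraph; I expect this to be the main obstacle, and once it is in place the integrability is inherited from the planar case.
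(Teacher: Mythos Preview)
Your proposal is correct and follows essentially the same route as the paper: normalize so that the central projection carries the spherical wall to a centered circle or to the line $\mathcal{L}=\{(x,0,-1)\}$ in $V$, observe that the spherical and planar reflection laws coincide along the wall, and conclude that $E_{pl}$ and $E_{sph}$ are conserved. Your explicit check that the round metric in the gnomonic chart has vanishing $dx\,dy$-term along $\{y=0\}$ is exactly the content behind the paper's appeal to \eqref{eq: E_{pl,2}} and \eqref{eq: kinetic sph}.

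There is one small edge case your normalizations do not cover: the \emph{centered great circle}, i.e.\ the equator with the Kepler--Coulomb centers at the poles. In your centered-circle setup the equator projects to infinity in $V$, and in your great-circle setup there is no unique nearest point to the center (all equatorial points are at angular distance $\pi/2$), so any choice puts the center itself on the new equator and hence at infinity in the chart. The paper disposes of this case separately by approximating the equatorial wall by nearby horizontal small circles and passing to the limit (equivalently, by noting directly that $E_{sph}$ together with the squared angular momentum about the polar axis are conserved). You should insert the same one-line limiting or angular-momentum argument to close this gap.
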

\begin{proof}

We first examined the case of centered circles.  We put the circle horizontal. In the case of a small circle in $S_{SH}$ we project it centrally to $V:=\{z=-1\}$. In this case we see just as in the planar case that the laws of reflections in the plane and on the sphere correspond to each other and thus both $E_{sph}$ and $E_{pl}$ are first integrals. In the case of a horizontal great circle, we may simply approximate the billiard system thus obtained by a family of billiard systems on the sphere with horizontal small circles. By a limiting argument,  both $E_{sph}$ and $E_{pl}$ are first integrals also in this case. Note that this pair of first integrals are equivalent to $E_{sph}$ together with the square of the angular momentum with respect to the axis passing through the centers, which is clearly also a conserved quantity.
 
When the great circle is not centered on the sphere, by rotation we may set the circle wall of reflection to lie in the plane $\{(x,0,z)\}$ and the centers to lie in the plane $\{(0, y, z)\}$. The intersection of the great circle wall of reflection with $S_{SH}$ is then centrally projected to a line in the planar system in $V$ and the relative position of the projected center in $V$ with respect to the south pole is perpendicular to the line in $V$. In this case the laws of reflection in the plane and on the sphere correspond to each other as has been observed in the proof of the planar case.
Therefore the planar and spherical energies are functionally independent conserved quantities of the corresponding billiard system on $S_{SH}$. Finally by analyticity the planar energy extends to a first integral of the billiard system on the sphere.
\end{proof}

\begin{figure}\label{Figure: Sph}
\center
\includegraphics[width=50mm]{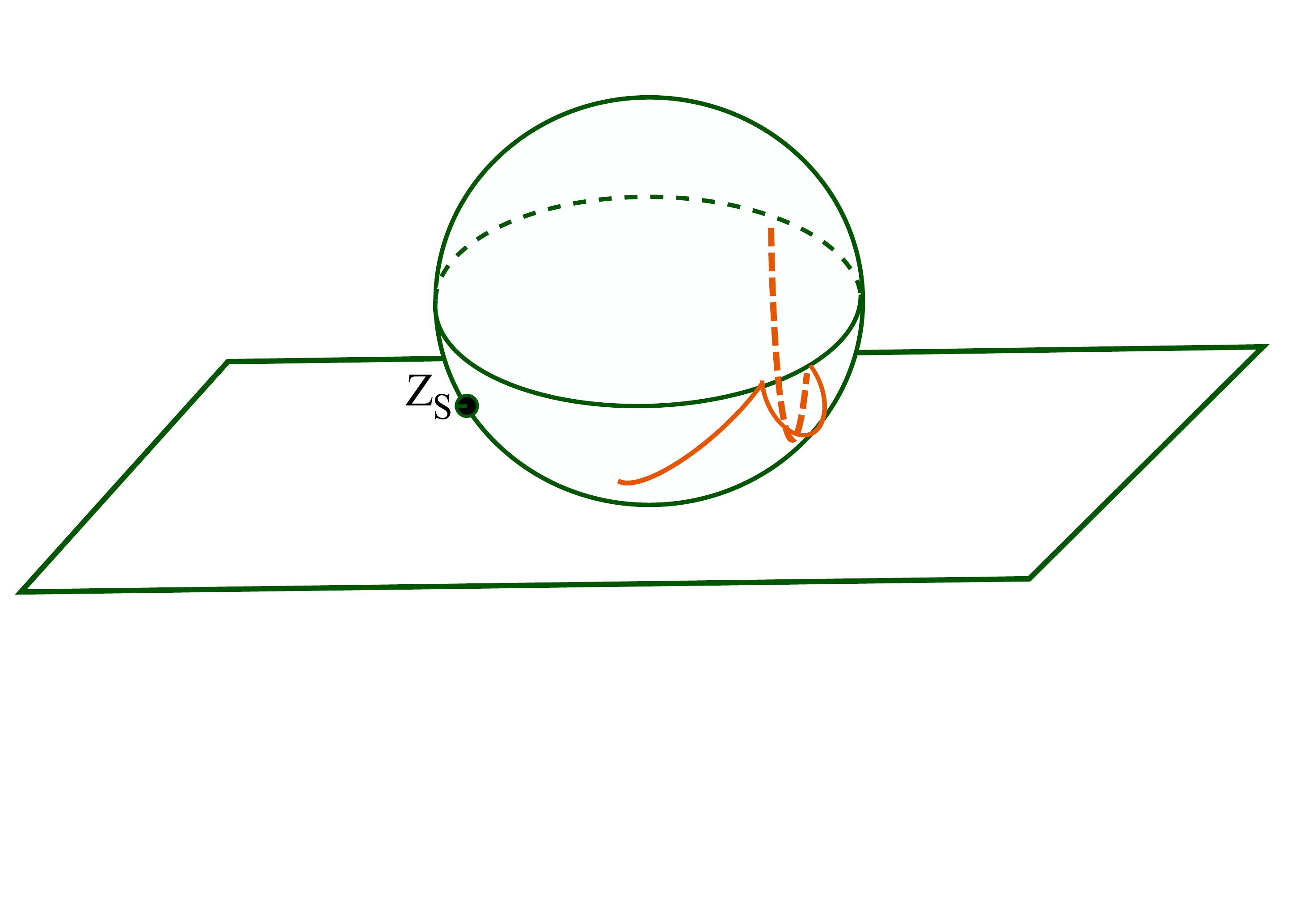}
\caption{A spherical billiard system}
\end{figure}



It is natural to consider these spherical systems on $S$ as extensions of the planar billiard systems proposed in Section \ref{Sec: Int Models plan}, in a similar way as the spherical Kepler-Coulomb problem extends the corresponding planar Kepler-Coulomb problem. In particular, the unbounded orbits in the integrable Boltzmann system is completed in its spherical corresponding system.

\section{Some further remarks}\label{Sec: 5}
We conclude with a few remarks.

First we remark that with similar construction we may get several families of integrable billiard systems with potentials on the pseudosphere as well. 

Next we remark that conformal transformations induce correspondences of mechanical systems \cite{Goursat}, \cite{Darboux}, which can be used to obtain other integrable billiard systems. As an example, with the conformal mapping $\C \to \C, z \mapsto z^{2}$, one gets from the integrable Boltzmann model an integrable billiard system in the plane with the harmonic potential of a pair of isotropic harmonic oscillators, with a hyperbola as wall of reflection. 

Moreover, the method of Darboux \cite{Darboux} can be used to obtain integrable billiard systems on a cone or on certain other surfaces of revolutions.

Finally, it can be interesting to better understand the integrable dynamics of all these systems. In particular, it can be  interesting to see to which extend the algebraic geometric method in \cite{Felder} can be applied to the systems on the sphere.

\medskip
\medskip
{\bf Acknowledgements}
Thanks to A. Albouy for helpful discussions and precise suggestion of references, and Gil Gor for his demand on clarification. The author is supported by DFG ZH 605/1-1.


\medskip
\medskip
\medskip
\medskip

Lei Zhao, University of Augsburg, Augsburg, Germany. Email: lei.zhao@math.uni-augsburg.de.

\end{document}